\title{Test comparison for Sobol indices over nested sets of variables}
\author[1]{Thierry Klein} 
\author[2]{Nicolas Peteilh}
\author[3]{Paul Rochet}
\affil[1]{Institut de Math\'ematiques de Toulouse; UMR5219. Universit\'e de Toulouse}
\affil[1,2,3]{ENAC - Ecole Nationale de l'Aviation Civile , Universit\'e de Toulouse, France.}
\newtheorem{theorem}{Theorem}[section]
\newtheorem{lemma}[theorem]{Lemma}
\newtheorem{prop}[theorem]{Proposition}
\newcommand{\var}{\operatorname{var}}
\begin{document}

\maketitle

\begin{abstract}
Sensitivity indices are commonly used to quantify the relative influence of any specific group of input variables on the output of a computer code. One crucial question is then to decide whether a given set of variables has a significant impact on the output. Sobol indices are often used to measure this impact but their estimation can be difficult as they usually require a particular design of experiment. In this work, we take advantage of the monotonicity of Sobol indices with respect to set inclusion to test the influence of some of the input variables. The method does not rely on a direct estimation of the Sobol indices and can be performed under classical iid sampling designs.

\end{abstract}

\textbf{Keywords}: Global sensitivity indices, Sobol indices, significance test

\medskip

\textbf{AMS subject classification}  60F05,62G05, 62G20, 62E20, 62F03, 62F05.

\section{Introduction}

The use of complex computer models for the analysis of applications from  sciences, engineering and other fields is by now routine. For instance, in the area of marine submersion, complex computer codes have been  developed  to simulate submersion events (see e.g.~\cite{betancourt:hal-01998724,idier:hal-02458084}) while sensitivity analysis and meta-modelling  are intensively used to optimize the airplanes designs \cite{peteilh:hal-02866381}. Meta-models usually depend on many input variables and are computationally expensive. Thus, it is  crucial to understand which of the input parameters have an  influence  on  the  output. One classical approach to deal with this kind of  problem is to consider the   inputs as random elements, a point of view  generally called (global) sensitivity analysis. We refer to \cite{rocquigny2008uncertainty,saltelli-sensitivity,sobol1993} for an overview of the practical aspects. \\
 
Sobol indices, based on the Hoeffding decomposition \cite{Hoeffding48} of the output's variance, are one of the most used tools to perform global sensitivity analysis. They were first introduced in \cite{pearson1915partial} and later revisited in \cite{sobol2001global}. In the general framework, a square integrable output variable $Y$ is assumed to obey a non-parametric relation of the form

\begin{equation}\label{eq:mod}
Y = f(Z_1,\dots,Z_q) 
\end{equation}
where the $Z_j$'s are  input variables. In practice, an analytical expression for $f$ is usually not available and the only access we have to $f$ is through experimentation or computer code. 
One of the main tasks the practitioner has to deal with is to decide whether a group of variables has any influence on the output $Y$. An effective way to measure the influence of a subset $u \subset \{1,\dots,p\}$ of input variables is to consider the Sobol index of $Y$ with respect to $X_j, j \in u$, defined by
$$ S^{(u)}: = \frac{\var\left(\mathbb E[Y|Z_j,j\in u]\right)}{\var(Y)}. $$
It is easy to see that the Sobol index $S^{(u)}$ is zero if, and only if, the conditional expectation $\mathbb E[Y|Z_j,j\in u]$ is constant, in which case one can naturally consider that the inputs $Z_j, j\in u$ have no direct influence on $Y$ (although they may have an impact on $Y$ through interactions with other variables). On the other hand, the extra information carried by additional inputs can be quantified by the resulting increase in the Sobol index $S^{(v)}$ for $v \supset u$. Because the equality of Sobol indices for nested sets of inputs in equivalent to the (almost sure) equality of the conditional expectations: 
$$ \forall u,v \ , \  u \subset v \ : \ S^{(u)} = S^{(v)} \iff \mathbb E [ Y | Z_j, j \in u] \ \overset{a.s.} = \ \mathbb E[Y | Z_j, j \in v], $$  
a natural notion of non-parametric significance can be established by comparing Sobol indices over nested sets of input variables. \\

Many different estimation procedures of the Sobol indices have been proposed in the literature. Some are based on Monte-Carlo or quasi Monte-Carlo designs of experiment, see \cite{Kucherenko2017different,Owen13}. More recently, a method based on nested Monte-Carlo \cite{GODA201763} has been developed. Other estimation procedures are based on different designs of experiment using for example polynomial chaos expansions \cite{Sudret2008global}. An efficient estimation of the Sobol indices can be performed through the so-called ``Pick-Freeze'' method, whose theoretical properties (consistency, central limit theorem, concentration inequalities and Berry-Esseen bounds) have been studied in \cite{pickfreeze, janon2012asymptotic}. In particular, the joint central limit theorem enables to build asymptotic comparison tests on Sobol indices. However, the Pick-Freeze method requires a specific design of experiment which makes it inapplicable in the classical iid framework and computationally expensive (for instance, the $p$ order one Sobol indices estimators need $n(p+1)$ computations of the function $f$). This drawback was recently partially solved in \cite{ggkl21}, where the order one Sobol indices are estimated from rank statistics in the classical iid sample scheme. Nevertheless, the absence of a joint CLT in this case makes it impossible to test hypotheses involving more than one Sobol index at a time.  \\

In this work, we present an alternative way to build non-parametric significance tests, used for the detection of non-influent variables. A main motivation of the proposed procedure is to perform non-parametric variable selection, in order for instance to reduce the cost of a computational code or simplify a meta-model. The originality  of our approach stems from  a reformulation of the null hypothesis in terms of the empirical process, thus bypassing the difficulty of  having to estimate the  Sobol indices. This allows to perform multiple significance tests using a single sample, thus potentially reducing the computational cost compared to alternative methods that rely on specific sampling designs. The framework and theoretical setting are presented in Section \ref{sec:framework}, while the construction of the test is detailed in Section \ref{sec:test}. In Section \ref{sec:appli}, we show a numerical study comparing the performances of the test procedure to the classical one introduced in \cite{pickfreeze, janon2012asymptotic}, and describe a step-by-step approach for non-parametric variable selection applied to aeronautical data.

\section{Theoretical framework}\label{sec:framework}
We consider the model 
\begin{equation}\label{eq:mod1bis}
Y \overset{a.s.} =f(Z_1,\ldots, Z_q)
\end{equation}
where $f$ is an unknown function, $Y$ is a square integrable real random variable and $Z_1,...,Z_q$ are independent real valued inputs. For any subsets $u,v$ of $\{1,\ldots,q\}$ such that $\emptyset \subseteq u \subset v$ we are interested in testing
$$ H_0: \mathbb E[Y | Z_j, j \in u] \ \overset{a.s.}{=} \ \mathbb E[Y | Z_j, j \in v].  $$
This setting can be viewed as a non-parametric significance test for the variable $(Z_j)_{j \in v \setminus u}$ in presence of $(Z_j)_{j \in u}$, where the influence of an input is measured by its impact on the conditional expectation function. As an important particular case, the importance of the inputs $Z_j, j \in v $ on $Y$ can be investigated by setting $u = \emptyset$. Without loss of generality, we may rewrite the model as 
\begin{equation}\label{eq:mod2}
Y \overset{a.s.} = f(X,W)
\end{equation}
where $X := (Z_j)_{j \in v} $ represents the inputs of interest to explain $Y$ and $W := (Z_j)_{j \notin v}$ contains all the variables that are irrelevant to the analysis. Alternatively, $W$ may contain hidden random inputs encountered in the context of stochastic codes \cite{mazo2021trade, FKL21}. For our purposes however, the precise nature of $W$ is not important as both situations are dealt with in the same way. \\

Since $Y$ is square-integrable, the conditional expectation $\mathbb E[Y | X]$ can be defined as an orthogonal projection of $Y$ in $\mathbb L^2$ onto the linear space of square integrable mesurable functions of $X$. In particular, the Hoeffding decomposition 
$$ \var(Y) = \var \big( \mathbb E [Y | X ] \big) +   \var \big( Y -  \mathbb E [Y | X ] \big)   $$
follows from the Pythagorean theorem. The Sobol index of $Y$ associated to $X$ is defined by
$$ S = \frac{\var \big( \mathbb E [Y | X] \big) }{\var(Y)} \in [0;1]. $$
For any  subset $u$  of $\{1,...,p\}$ and $x = (x_1,...,x_p) \in \mathbb R^p$, we denote $x^{(u)} = (x_j)_{j \in u}$ with the convention $x^{(u)} = 0$ if $u = \emptyset$, and by $S^{(u)}$ the Sobol index associated to $X^{(u)}$:
\begin{equation}\label{eq:sob}
S^{(u)} = \frac{\var \big( \mathbb E [Y | X^{(u)}] \big) }{\var(Y)} \in [0,1]. 
\end{equation}
The influence of the inputs $X_j, j \notin u$ on the conditional expectation function can be assessed by considering the null hypothesis
\[
H_0\ :   S^{(u)}=  S \qquad \textrm{against}  \qquad H_1:\ S^{(u)} < S.
\]

A natural test for $H_0$ exists whenever one can construct estimators of the Sobol indices with known (or estimable) joint limit distribution. However, typical estimation methods such as Pick-Freeze usually requires a specific design of experiment. The test procedure proposed in this paper does not rely on a direct estimation of the Sobol indices and applies in the typical iid sampling design. The method makes use of an equivalent formulation of $H_0$ described in Lemma \ref{lem:statequiv} below.\\

For two vectors $a = (a_1,...,a_k), b = (b_1,...,b_k)$, $a \wedge b := (a_1 \wedge b_1, ..., a_k \wedge b_k)$ denotes the component-wise minimum, while the inequality $a \leq b$ is meant as $a \wedge b = a$. The indicator function is denoted by $\mathds 1 \{ . \}$.

\begin{lemma}\label{lem:statequiv} If $X_1,...,X_p,W$ are independent, then for all $ u \subseteq \{1,...,p\}$, the following assertions are equivalent:
\begin{itemize}
    \item [i)] $ S^{(u)}= S$.
    \item [ii)] $\mathbb E [ Y|X^{(u)} ] = \mathbb E [Y | X]$.
    \item [iii)] For all $x \in \mathbb R^p$ such that $\mathbb P(X \leq x) > 0$, $\mathbb E [Y | X^{(u)} \leq x^{(u)}] = \mathbb E[Y|X \leq x]$.
 \item [iv)] For all $x \in \mathbb R^p$, $\mathbb E [Y \mathds 1 \{ X \leq x \} ] = \mathbb E [Y \mathds 1 \{ X^{(u)} \leq x^{(u)} \} ] \, \mathbb P(X^{(\overline u)} \leq x^{(\overline u)})$.
\end{itemize}
\end{lemma}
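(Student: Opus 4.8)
The plan is to prove the equivalence i) $\Leftrightarrow$ ii) directly, and then close the cycle ii) $\Rightarrow$ iii) $\Rightarrow$ iv) $\Rightarrow$ ii), so that all four statements coincide. Throughout, the structural inputs are the inclusion $\sigma(X^{(u)}) \subseteq \sigma(X)$, the tower property $\mathbb E[Y\mid X^{(u)}] = \mathbb E[\mathbb E[Y\mid X]\mid X^{(u)}]$, and --- decisively --- the independence of $X^{(u)}$ and $X^{(\overline u)}$, which is inherited from the independence of $X_1,\dots,X_p$. The variable $W$ is irrelevant here: only the joint law of $(Y,X)$ enters, and we tacitly assume $\var(Y)>0$, as otherwise $S$ and $S^{(u)}$ are not defined.

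For i) $\Leftrightarrow$ ii), I would invoke the Pythagorean identity already used in the text: since $\mathbb E[Y\mid X^{(u)}]$ is the $\mathbb L^2$-projection of $\mathbb E[Y\mid X]$ onto the square-integrable functions of $X^{(u)}$, one has $\var\big(\mathbb E[Y\mid X]\big) = \var\big(\mathbb E[Y\mid X^{(u)}]\big) + \var\big(\mathbb E[Y\mid X] - \mathbb E[Y\mid X^{(u)}]\big)$, and after dividing by $\var(Y)$ the equality $S^{(u)}=S$ holds iff the last variance vanishes, i.e. iff $\mathbb E[Y\mid X]=\mathbb E[Y\mid X^{(u)}]$ almost surely.

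For ii) $\Rightarrow$ iii) $\Rightarrow$ iv), write $g(X^{(u)}):=\mathbb E[Y\mid X^{(u)}]$. Assuming ii), for any $x$ with $\mathbb P(X\le x)>0$ expand $\mathbb E[Y\mathds 1\{X\le x\}] = \mathbb E[g(X^{(u)})\mathds 1\{X^{(u)}\le x^{(u)}\}\mathds 1\{X^{(\overline u)}\le x^{(\overline u)}\}]$, factor both this expectation and $\mathbb P(X\le x)$ by independence, cancel the common positive factor $\mathbb P(X^{(\overline u)}\le x^{(\overline u)})$, and recognize the quotient as $\mathbb E[Y\mid X^{(u)}\le x^{(u)}]$; this is iii). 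From iii), clearing denominators and again using $\mathbb P(X\le x)=\mathbb P(X^{(u)}\le x^{(u)})\,\mathbb P(X^{(\overline u)}\le x^{(\overline u)})$ gives iv) for every $x$ with $\mathbb P(X\le x)>0$; for the remaining $x$, the relation $\mathbb P(X^{(u)}\le x^{(u)})\,\mathbb P(X^{(\overline u)}\le x^{(\overline u)})=0$ forces both sides of iv) to vanish.

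The substantive step is iv) $\Rightarrow$ ii). Fix a version $g$ of $\mathbb E[Y\mid X^{(u)}]$ and set $\phi(s):=g(s^{(u)})$ for $s\in\mathbb R^p$; it suffices to show that $\phi(X)$ is a version of $\mathbb E[Y\mid X]$, i.e. that the finite signed measure $B\mapsto \mathbb E[(Y-\phi(X))\mathds 1\{X\in B\}]$ on $\mathbb R^p$ is identically zero. Using independence together with $\mathbb E[g(X^{(u)})\mathds 1\{X^{(u)}\le x^{(u)}\}]=\mathbb E[Y\mathds 1\{X^{(u)}\le x^{(u)}\}]$, hypothesis iv) says precisely that this signed measure annihilates every lower quadrant $(-\infty,x]$. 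Since such quadrants form a $\pi$-system generating the Borel $\sigma$-algebra of $\mathbb R^p$ and contain a sequence increasing to $\mathbb R^p$, the uniqueness theorem for finite measures --- applied to the Jordan (positive and negative) parts of the signed measure, since $Y$ need not be nonnegative --- forces it to vanish on all Borel sets, giving $\mathbb E[Y\mid X]=\phi(X)=\mathbb E[Y\mid X^{(u)}]$ a.s. I expect this measure-uniqueness argument, and in particular the care needed because the ``distribution function'' $x\mapsto \mathbb E[Y\mathds 1\{X\le x\}]$ stems from a signed rather than a positive measure, to be the only genuine obstacle; the remaining implications are direct computations.
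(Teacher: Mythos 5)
Your proof is correct and follows essentially the same route as the paper: the variance (Pythagorean) decomposition for i) $\Leftrightarrow$ ii), and independence plus the factorization $\mathds 1\{X\le x\}=\mathds 1\{X^{(u)}\le x^{(u)}\}\mathds 1\{X^{(\overline u)}\le x^{(\overline u)}\}$ for the remaining equivalences. The only difference is that you spell out, via the signed-measure/$\pi$-system uniqueness argument, the step the paper compresses into ``by definition of the conditional expectation'' (that equality of conditional expectations is equivalent to equality of integrals over all lower quadrants), which is a welcome but not substantively different elaboration.
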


\begin{proof} Let $\phi(X) = \mathbb E [ Y | X ]$ and remark that $\mathbb E[Y | X^{(u)}] = \mathbb E [\phi(X) | X^{(u)} ]$.  The equivalence between $i)$ and $ii)$ follows from the well known decomposition
$$ \var \big( \phi(X) \big) = \var \big( \mathbb E \big[ \phi(X) | X^{(u)} \big] \big) + \mathbb E \big[ \var \big(\phi(X) | X^{(u)} \big) \big], $$
where the non-negative term $\mathbb E \big[ \var \big(\phi(X) | X^{(u)} \big) \big]$ is zero if, and only if, $\mathbb E[ \phi(X) | X^{(u)}] = \phi(X)$. By definition of the conditional expectation
$$ \mathbb E [Y | X] = \mathbb E[Y | X^{(u)}] \iff \forall x \in \mathbb R^p \ , \ \mathbb E \big[ Y \mathds 1 \{ X \leq x \} \big] = \mathbb E \big[ \mathbb E[Y | X^{(u)}]  \mathds 1 \{ X \leq x \} \big]. $$
The independence of the $X_j$'s and the fact that $\mathds 1 \{ X \leq x \} = \mathds 1 \{ X^{(u)} \leq x^{(u)} \} \mathds 1 \{ X^{(\overline u)} \leq x^{(\overline u)}  \}$ give
$$  \mathbb E \big[ \mathbb E(Y | X^{(u)})  \mathds 1 \{ X \leq x \} \big] = \mathbb E \big[ Y \mathds 1 \{ X^{(u)} \leq x^{(u)} \} \big] \, \mathbb P \big(X^{(\overline u)} \leq x^{(\overline u)} \big) $$
which shows $ii) \iff iv)$. The equivalence $ iii) \iff iv)$ follows by dividing both sides of the equality $iv)$ by $\mathbb P(X \leq x)$.
\end{proof}

Assume we observe an iid sample $(Y_1,X_1),...,(Y_n,X_n)$ drawn from the same distribution as $(Y,X)$. 
For all $ k \in \{0,1,2 \}$ and $u \subseteq \{1,...,p\}$, let $m_k^{(u)}: x \mapsto \mathbb E[ Y^k \mathds 1 \{ X^{(u)} \leq x^{(u)}\}] $ and denote by $\widehat m_k^{(u)}(.)$ its empirical counterpart:
$$ \widehat m_k^{(u)}(x) =  \frac 1 n \sum_{i=1}^n Y_i^k \mathds 1 \{ X_i^{(u)} \leq x^{(u)} \} \ , \ x \in \mathbb R^p.  $$
For ease of notation, we shall simply write $m_k$ and $\widehat m_k$ for the case $u = \{ 1,...,p\}$. By Lemma \ref{lem:statequiv}, we know that the null hypothesis $ H_0 : S^{(u)} = S  $ can be stated as $\xi := m_1 - m_1^{(u)} m_0^{(\overline u)}$ being identically zero. In this logic, we use the empirical version $\widehat \xi$ to build a test statistics for $H_0$. \\

\noindent For the next proposition, we denote by $\eta = (m_1, m_1^{(u)}, m_0^{(\overline u)})^\top$ and $\widehat \eta :=(\widehat m_1, \widehat m_1^{(u)}, \widehat m_0^{(\overline u)})^\top$ its empirical counterpart. Moreover, let us write $x^{(u)} \oplus {x'}^{(\overline u)}$ for the vector of $\mathbb R^p$ with components $x_i$ if $i \in u$ and $x'_i$ if $i \notin u$.

\begin{prop}\label{prop:as}
The normalized process $ \sqrt n ( \widehat \eta - \eta )  $ converges in finite-dimensional distribution towards a $3$-dimensional centered Gaussian field indexed by $\mathbb R^p$ with auto-covariance function
$$ \Omega(x, x') := \left[ \begin{array}{ccc} 
m_2(x \wedge x') & m_2 \big( (x \wedge {x'})^{(u)} \oplus x^{(\overline u)} \big) & m_1 \big( x^{(u)} \oplus (x \wedge {x'})^{(\overline u)} \big)  \\ 
m_2 \big( (x \wedge {x'})^{(u)} \oplus {x'}^{(\overline u)} \big) & m_2^{(u)}(x \wedge x') & m_1 \big(x^{(u)} \oplus {x'}^{(\overline u)} \big) \\ 
m_1\big( {x'}^{(u)} \oplus (x \wedge {x'})^{(\overline u)} \big) & m_1 \big( {x'}^{(u)} \oplus x^{(\overline u)} \big) & m_0^{(\overline u)}(x \wedge x') \end{array} \right] - \eta(x) \eta(x')^\top $$
for all $ x,x' \in \mathbb R^p $.
\end{prop}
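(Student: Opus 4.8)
The plan is to view $\widehat\eta$ as an empirical mean of iid vector-valued functions and to reduce the statement to the ordinary multivariate central limit theorem applied coordinate by coordinate. For the $i$-th observation, set
$$ \zeta_i(x) := \big( Y_i \mathds 1\{X_i \leq x\},\ Y_i \mathds 1\{X_i^{(u)} \leq x^{(u)}\},\ \mathds 1\{X_i^{(\overline u)} \leq x^{(\overline u)}\} \big)^\top \in \mathbb R^3 , $$
so that $\widehat\eta(x) = \tfrac1n \sum_{i=1}^n \zeta_i(x)$ and $\eta(x) = \mathbb E[\zeta_1(x)]$. Since only convergence of finite-dimensional distributions is claimed, I would fix arbitrary $x_1,\dots,x_m \in \mathbb R^p$ and stack the vectors $\zeta_i(x_1),\dots,\zeta_i(x_m)$ into a single element of $\mathbb R^{3m}$. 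These stacked vectors are iid over $i$ and have finite second moment — the coordinates carrying a factor $Y_i$ are bounded in modulus by $|Y_i| \in \mathbb L^2$, the others by $1$ — so the multivariate CLT delivers a centered Gaussian limit whose covariance is the $3m\times 3m$ matrix with $(k,\ell)$ block $\cov(\zeta_1(x_k),\zeta_1(x_\ell))$. No tightness argument is needed; the limiting ``field indexed by $\mathbb R^p$'' is just the Kolmogorov-consistent Gaussian process attached to this family of covariances.

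It then remains to show $\cov(\zeta_1(x),\zeta_1(x')) = \Omega(x,x')$ for all $x,x'$. Writing $\cov(\zeta_1(x),\zeta_1(x')) = \mathbb E[\zeta_1(x)\zeta_1(x')^\top] - \eta(x)\eta(x')^\top$ and observing that $\eta(x)\eta(x')^\top$ is precisely the matrix subtracted in the definition of $\Omega$, the task is to match $\mathbb E[\zeta_1(x)\zeta_1(x')^\top]$ entrywise with the displayed array of $m_k$'s. The two identities driving this are the factorization $\mathds 1\{X \leq x\} = \mathds 1\{X^{(u)} \leq x^{(u)}\}\,\mathds 1\{X^{(\overline u)} \leq x^{(\overline u)}\}$ and, coordinatewise, $\mathds 1\{a \leq b\}\,\mathds 1\{a \leq b'\} = \mathds 1\{a \leq b \wedge b'\}$: any product of a coordinate of $\zeta_1(x)$ with one of $\zeta_1(x')$ collapses to $Y^k \mathds 1\{X \leq z\}$ with $k \in \{0,1,2\}$, where on each index block ($u$ or $\overline u$) the vector $z$ equals the componentwise minimum of $x$ and $x'$ if that block is constrained by both factors, and the single relevant value otherwise. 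For instance the $(1,2)$ entry gives $\mathbb E\big[Y^2 \mathds 1\{X^{(u)} \leq (x\wedge x')^{(u)}\} \mathds 1\{X^{(\overline u)} \leq x^{(\overline u)}\}\big] = m_2\big((x\wedge x')^{(u)} \oplus x^{(\overline u)}\big)$, and the $(1,3)$ entry gives $\mathbb E\big[Y \mathds 1\{X^{(u)} \leq x^{(u)}\} \mathds 1\{X^{(\overline u)} \leq (x\wedge x')^{(\overline u)}\}\big] = m_1\big(x^{(u)} \oplus (x\wedge x')^{(\overline u)}\big)$; the remaining seven entries are obtained in exactly the same way, reproducing the matrix in the statement.

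I do not expect a genuine obstacle: the argument is a bare application of the CLT, and the only thing demanding attention is the bookkeeping of the $\oplus$ and $\wedge$ combinations across the nine entries, together with the (purely cosmetic) asymmetry of the off-diagonal blocks, which stems from $x$ and $x'$ playing non-interchangeable roles inside $\zeta_1(\cdot)$. It is also worth noting that, in contrast with Lemma \ref{lem:statequiv}, this proposition does not use independence of the $X_j$'s at all — the indicator factorization above is a deterministic identity valid for any joint law of $X$.
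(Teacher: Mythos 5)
Your proposal is correct and follows exactly the route the paper intends: the paper dismisses the proof as ``a straightforward consequence of the central-limit theorem,'' and you simply supply the details (stacking the iid vectors $\zeta_i(x_1),\dots,\zeta_i(x_m)$, invoking the multivariate CLT under the square-integrability of $Y$, and verifying the nine covariance entries via the identities $\mathds 1\{X\le x\}=\mathds 1\{X^{(u)}\le x^{(u)}\}\mathds 1\{X^{(\overline u)}\le x^{(\overline u)}\}$ and $\mathds 1\{a\le b\}\mathds 1\{a\le b'\}=\mathds 1\{a\le b\wedge b'\}$), all of which check out against the stated matrix $\Omega(x,x')$. Your side observation that independence of the coordinates of $X$ plays no role here is also accurate.
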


The proof is a straightforward consequence of the central-limit theorem. The convergence in distribution can be shown without any particular obstacle using Vapnik-Chervonenkis' theory although this result is not needed for the theoretical validity of the test.

\section{The test procedure}\label{sec:test}

Consider the process $\widehat \xi = \widehat m_1 - \widehat m_1^{(u)} \widehat m_0^{(\overline u)}$, whose asymptotic distribution can be derived from the delta method applied to the smooth function $\phi:(s,t,u) \mapsto s - tu$ from $\mathbb R^3$ to $\mathbb R$, using Proposition \ref{prop:as}. Given a fixed collection $\mathbf x = (x_1, ..., x_K) $ of points in $\mathbb R^p$ chosen independently from the sample, the random vector $\widehat \xi(\mathbf x) = \big( \widehat \xi(x_1), ..., \widehat \xi(x_K) \big)^\top$ is asymptotically Gaussian
$$ \sqrt n \big(\widehat \xi(\mathbf x) - \xi(\mathbf x) \big) \xrightarrow[n \to \infty]{d} \mathcal N \big( 0, \Sigma (\mathbf x) \big) $$
with covariance matrix
$$ \Sigma (\mathbf x) := \Big( \nabla \phi(x_k)^\top \Omega(x_k,x_{k'}) \nabla \phi(x_{k'}) \Big)_{k,k' =1,...,K} .$$
The empirical version $\widehat \Sigma(\mathbf x)$ obtained by replacing the functions $m_k^{(u)}$ by their empirical counterparts $\widehat m_k^{(u)}$, is clearly a consistent estimator of $\Sigma(\mathbf x)$ in virtue of the law of large numbers. Since the hypothesis $H_0: S^{(u)} = S$ can be stated equivalently as $H_0: \xi = 0$, a test can be performed by comparing the observed value of $\Vert \widehat \xi(\mathbf x) \Vert$ (for a well chosen norm $\Vert . \Vert $ on $\mathbb R^K$) to the appropriate quantile of the asymptotic distribution under $H_0$. Two natural approaches are then possible:
\begin{enumerate}
\item If $\Vert . \Vert$ is the natural Euclidean norm on $\mathbb R^K$, then under $H_0$,  
\begin{equation}\label{eq:stat_test1} T : = n \Vert \widehat \xi(\mathbf x) \Vert^2 = n \sum_{k=1}^K \widehat \xi(x_k)^2  
\end{equation}
converges in distribution towards a weighted $\chi^2$ distribution with weights given by the eigenvalues $\lambda_1,...,\lambda_K \geq 0$ of $\Sigma(\mathbf x)$. In other words, $n \Vert \widehat \xi(\mathbf x) \Vert^2$ has the same distribution asymptotically (under $H_0$) as $\epsilon^\top \Sigma(\mathbf x) \epsilon$ where $\epsilon$ is a standard Gaussian vector in $\mathbb R^K$. This distribution can be approximated by Monte-Carlo using the empirical version $\widehat \Sigma(\mathbf x)$ instead of the unknown $\Sigma(\mathbf x)$, in order to determine the threshold $\tau_\alpha$ over which the hypothesis is rejected, at any given significance level $\alpha \in (0,1)$. The Monte-Carlo part can be time consuming as a large number of replications may be needed to approximate the asymptotic distribution and corresponding quantile sufficiently well. 

\item A different approach consists in normalizing the vector $\widehat \xi(\mathbf x)$ in order to achieve a true (non-weighted) $\chi^2$ asymptotic distribution under $H_0$. If $\Sigma(\mathbf x)$ is invertible, with inverse $\Gamma( \mathbf x)$, a test statistics
\begin{equation}\label{eq:stat_test2}  T := n \widehat \xi(\mathbf x)^\top \widehat \Gamma(\mathbf x) \widehat \xi(\mathbf x) 
\end{equation}
for $\widehat \Gamma(\mathbf x)$ a consistent estimator of $\Gamma(\mathbf x)$, has the asymptotic distribution $\chi^2(K)$ under $H_0$, as $n \to \infty$. In practice, the naive estimator
$$ \widehat \Gamma(\mathbf x) = \widehat \Sigma(\mathbf x)^{-1} $$
is rarely a good choice, especially if $\widehat \Sigma(\mathbf x)$ is close to singular. In this case, a regularized version of the inverse leads to a better approximation of the asymptotic distribution. Typically, $\widehat \Gamma(\mathbf x)$ can be obtained by truncated singular value decomposition where the eigenvalues of $\widehat \Sigma(\mathbf x)$ below a certain threshold $t$ are ignored (see for instance \cite{engl1996regularization} for further details on inverse matrix regularization). The observed value of the test statistics is then compared to the quantile of the $\chi^2$ distribution with $r = \operatorname{rank}(\widehat \Gamma(\mathbf x))$ degrees of freedom. In the numerical study, we use the regularization threshold $t = 0.1 n^{-1/3} 
\rho ( \widehat \Sigma(\mathbf x)) 
\lambda_1$ where $\lambda_1$ is the largest eigenvalue of $\widehat \Sigma(\mathbf x)$, which ensures in particular that $r \geq 1$. Further details are discussed in Section \ref{sec:appli}.
\end{enumerate}

For both these approaches, the number $K$ of points over which the empirical process $\widehat \xi$ is evaluated is only constrained by the computation time. A larger experimental design $\mathbf x$ may improve the power of the test with no negative impact on the significance level, as we discuss in Section \ref{sec:appli}.\\

In practice, the $x_k$'s may be drawn uniformly on the domain of $X$ if it is bounded, or from an arbitrary distribution $\mu$ on $\mathbb R^p$. In this case, the normalized test statistics can be viewed as a Monte-Carlo approximation of the integral $\int n \widehat \xi^2 d \mu$. Although possible in practice, we do not recommend using the available sample $(X_1,...,X_n)$ as the design due to the poor resulting performance of the test. If the distribution of the $X_i$'s is known to the practitioner, we may use the same distribution to draw the $x_k$'s. Under the alternative $H_1$, the power of the test highly depends on the design $\mathbf x$ (or the underlying distribution $\mu$) which should ideally favor regions of the space for which $\xi$ is far from zero, enabling the test statistics to grow more rapidly to infinity.

\section{Numerical application}\label{sec:appli}

\noindent Let $(Y_1, X_1), ...., (Y_n,X_n)$ be an iid sample on $\mathbb R \times \mathbb R^3$ obeying the relation
$$ Y_i = f(X_i) \ , \ i=1,...,n  $$
where
$$ f(x) =  (2 + x_3 ^4) \sin(x_1) + 7 \sin^2(x_2) \ , \ x = (x_1,x_2,x_3) \in \mathbb R^3. $$
The $X_i $'s are assumed independent with the uniform distribution on $[- \pi, \pi ]^3$. This function is commonly used in sensitivity analysis as a test case and is classically referred to as the Ishigami function. \\

From the two possible approaches discussed in the previous section, we choose the second one due to its faster computation time. Thus, the test statistics is given by
$$ T = n \widehat \xi(\mathbf x)^\top \widehat \Gamma(\mathbf x) \widehat \xi(\mathbf x)   $$
where $\widehat \Gamma(\mathbf x)$ is a regularized inverse of the empirical estimator $\widehat \Sigma(\mathbf x)$, whose precise construction will be detailed below. \\

The experimental design $\mathbf x = (x_1,...,x_K)$ is drawn from the same distribution as the original sample, namely a uniform distribution on $[-\pi, \pi]^3$. We draw $K=10$ points to build the test. This seemingly small value ended up providing satisfactory results while keeping the computation time reasonable. The power of the test can be slightly improved by taking a larger experimental design $\mathbf x$ but the additional time is too much amplified in our framework where numerous replications were made to evaluate the performances of the test. 
For an actual application of the method where the algorithm is run only once, the computation takes at most a few minutes and the size of $\mathbf x$ is not much of a limiting factor. \\

The matrix $\widehat \Gamma(\mathbf x)$ is obtained from a truncated singular value decomposition of $\widehat \Sigma(\mathbf x)$. Precisely, let $ \lambda_1 \geq ... \geq \lambda_K$ be the ordered eigenvalues of $\widehat \Sigma(\mathbf x)$ and consider the singular value decomposition 
$$ \widehat \Sigma(\mathbf x) = P \operatorname{Diag}(\lambda_1,...,\lambda_K) P^\top $$
where 
$P$ is orthogonal (i.e.~$P P^\top = P^\top P = I$). We define 
$$ \widehat \Gamma(\mathbf x) = P \operatorname{Diag}\big( g_t(\lambda_1) ,..., g_t(\lambda_K) \big)  P^\top  $$
where $g_t$ is the so-called truncated SVD filter function $g_t(x) = 1/x$ if $x >t$ and $g_t(x)=0$ otherwise. The test statistics $T$ is then compared to the quantile of the $\chi^2$ distribution with $r$ degrees of freedom, where $r = \operatorname{rank}(\widehat \Gamma(\mathbf x))$ is the number of eigenvalues of $\widehat \Sigma(\mathbf x)$ larger than $t$. The hypothesis is rejected if the observed value of $T$ exceeds the $(1-\alpha)$-quantile of the $\chi^2(r)$ distribution. To ensure that $r > 0$, we choose $t$ equal to a vanishing proportion $\tau_n \in (0,1)$ of the spectral radius $\lambda_1$ of $\widehat \Sigma(\mathbf x) $: 
\begin{equation}\label{tau} t = \tau_n\lambda_1. \end{equation}
The rule of thumb $\tau_n = 0.1 n^{-1/3}$ is used in the simulations. \\

The test statistics and resulting p-values are calculated over $N = 10000$ replications of the experiments. Four different hypotheses are considered:
\begin{enumerate}
\item $H_0: S^{(3)} = 0  \iff \mathbb E[Y | X_3] = \mathbb E[Y]$
\item $H_0: S^{(2,3)} = S^{(2)} \iff \mathbb E[Y | X_2, X_3] = \mathbb E[Y | X_2]$
\item $H_0: S^{(1)} = 0 \iff \mathbb E[Y | X_1] = \mathbb E[Y]$ 
\item $H_0: S^{(1,3)} = S^{(1)} \iff \mathbb E[Y | X_1, X_3] = \mathbb E[Y | X_1]$
\end{enumerate}

As discussed previously, these hypotheses boil down to testing the non-parametric significance of some input variables, e.g.~the first one reduces to testing the influence of $X_3$ on $Y$ while the second one corresponds to testing the influence of $X_3$ in presence of $X_2$. The null hypothesis is true in the first two cases where the simulations aim to evaluate the actual significance level as a function of the nominal value $\alpha$  the test is supposed to achieve. For the last two cases, the null hypothesis is false with actual values of the Sobiol indices being and $S^{(1)}  \approx 0.402$ and $S^{(1,3)}  \approx 0.989 $. The simulations thus aim to evaluate the power of the test in these last two cases.\\

The results are compared with the test built from the Pick-Freeze estimators of the Sobol indices presented in \cite{gamboa2016statistical}. For each scenario, the expression in Equation (3) is used, and the p-value for the unilateral test is calculated. To easily differentiate the results of the two methods in what follows, the Pick-Freeze based test will be abbreviated to PF, while the method introduced in this paper will be referred to as the Empirical Process (EP) test. \\

We represent the probability of rejecting the null hypothesis for all $\alpha \in  [0,1]$ to give a global view of the distribution of the p-value, although, only the discrepancies between the actual and nominal values for $\alpha$ smaller than say $0.1$ (the range of values typically used in practice) are relevant to measure the reliability of the test procedure for practical purposes. The results are computed for three sample sizes $n$ which designate the number of calls to the function $f$. We emphasize that a specific sampling design is needed for the Pick-Freeze method, which is not the case for the EP test. In particular, all four hypotheses can be tested from a unique sample by the EP approach while individual samples need to be generated for each hypothesis for the PF test. In this aspect, the EP test provides a clear advantage to reduce the number of calls to $f$ if multiple hypotheses are to be tested.

\begin{figure}[h!]
\centering
Probability of rejecting $H_0: S^{(3)} = 0$
\includegraphics[width = \textwidth]{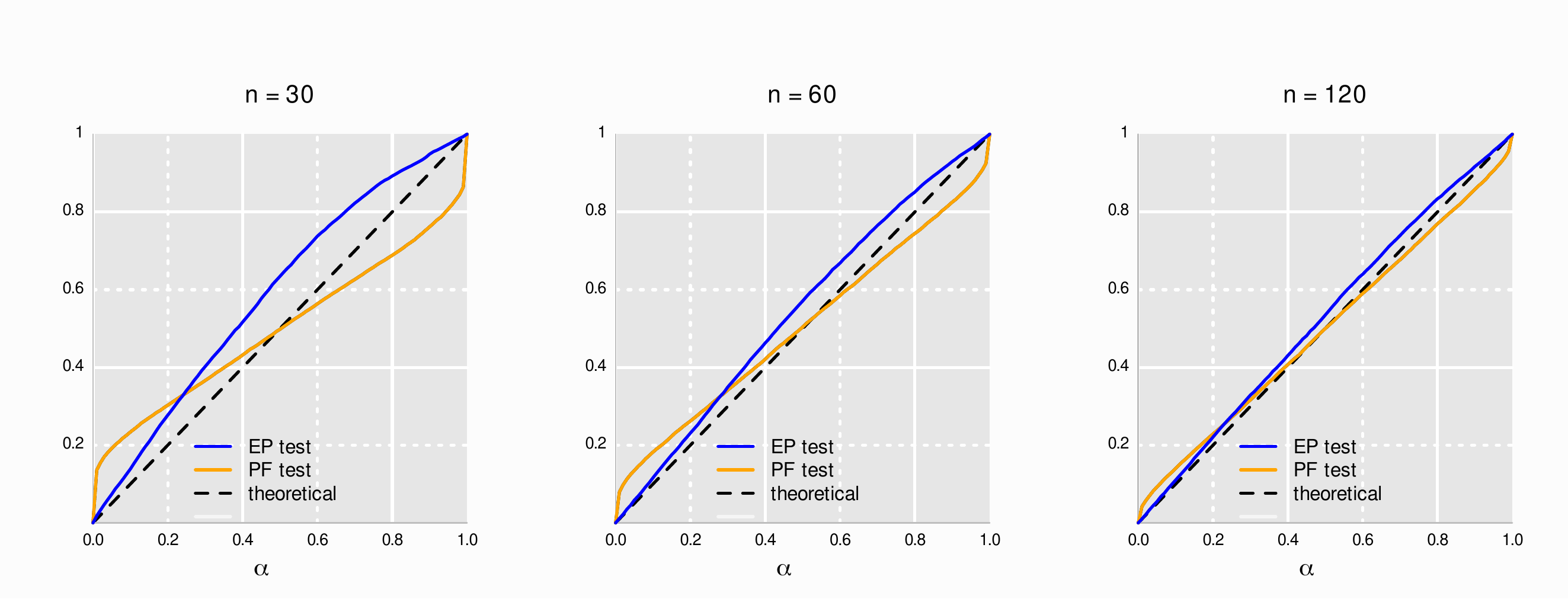}
\caption{\small Estimated probability of rejecting the null hypothesis $H_0: S^{(3)} = 0$ for the Empirical Process (EP) and Pick-Freeze (PF) tests. The empirical cdf of the tests' p-values are calculated on $N = 10000$ iterations and return the (estimated) actual significance level of the test as a function of the nominal level $\alpha$. }
\label{fig:3}
\end{figure}

\begin{figure}[h!]
\centering
Probability of rejecting $H_0: S^{(2,3)} = S^{(2)}$
\includegraphics[width = \textwidth]{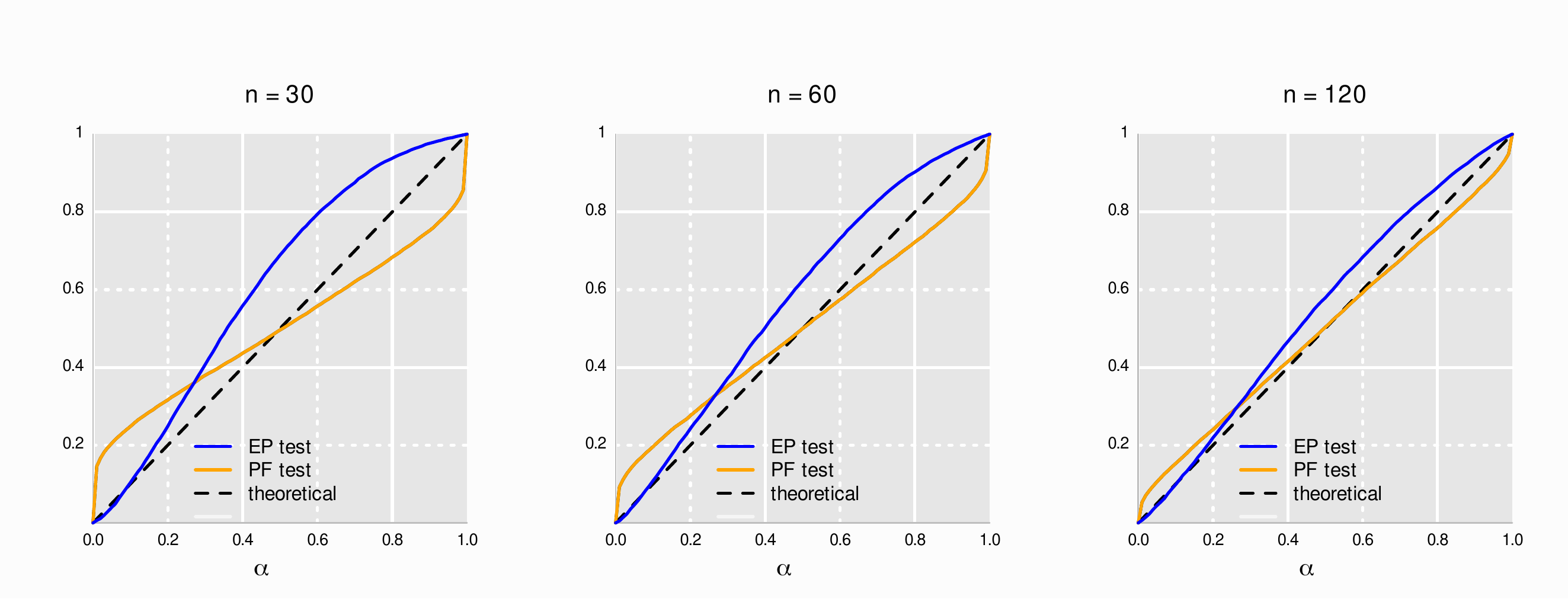}
\caption{\small Estimated probability of rejecting the null hypothesis $H_0: S^{(2,3)} = S^{(2)}$ for the EP and PF tests, as a function of the nominal significance level $\alpha$.}
\label{fig:23}
\end{figure}

As seen in Figures \ref{fig:3}, and \ref{fig:23}, the EP method appears more reliable than the PF approach for the null hypotheses $H_0: S^{(3)} = 0$ and $H_0: S^{(2,3)} = S^{(2)}$, as the (estimated) actual significance level is closer to the nominal value. Here, the rule of thumb with $\tau_n = 0.1 n^{-1/3}$ used for the TSVD regularization of $\widehat \Sigma(\mathbf x)$ seems to yield a well calibrated test for a nominal significance level $\alpha$ below $10 \%$. Unsurprisingly, the discrepancy is more pronounced for small sample sizes. The PF test seems unreliable in these cases as shown by the highly underestimated significance level for small values of $\alpha$. This could be due to a too slow convergence of the Sobol index estimator to a Gaussian distribution, on which the calculations of the critical regions of the PF test are based on. \\

\begin{figure}[h!]
\centering
Probability of rejecting $H_0: S^{(1)} = 0$
\includegraphics[width = \textwidth]{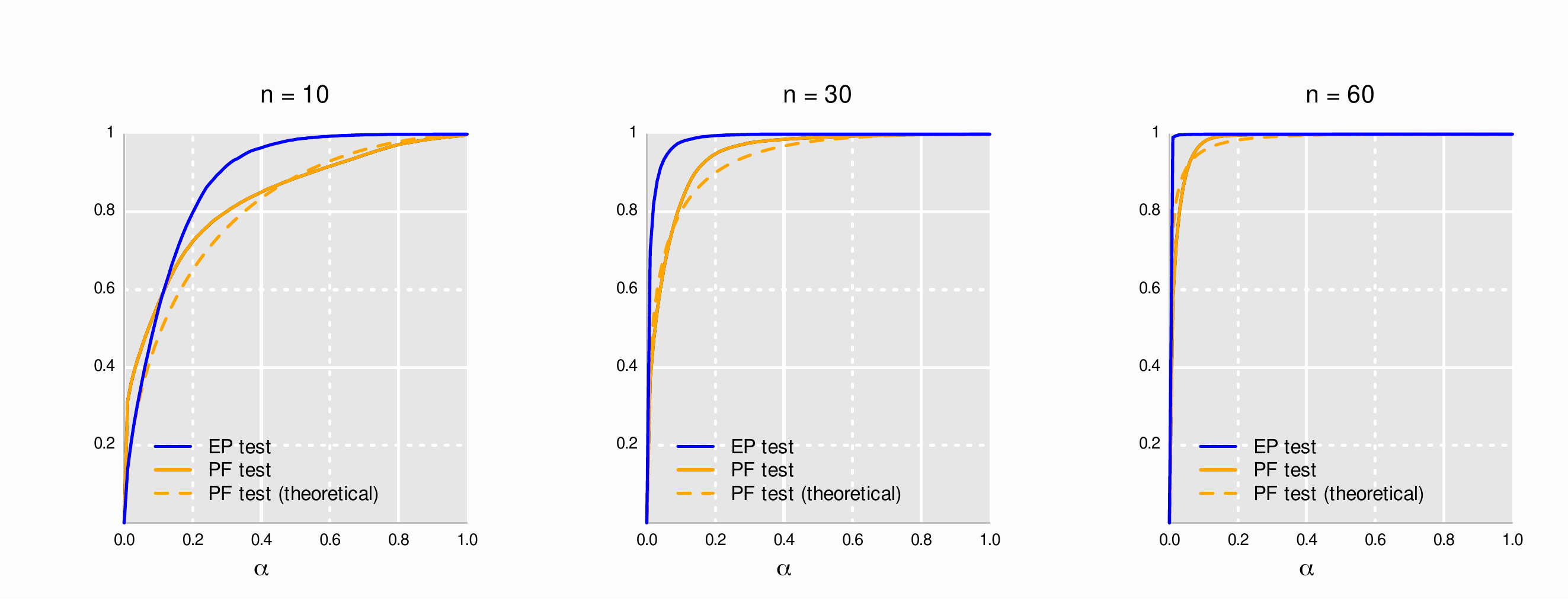}
\caption{\small Estimated probability of rejecting the null hypothesis $H_0: S^{(1)} = 0$ for the EP and PF tests. The orange dashed line gives the asymptotic theoretical power of the PF test obtained under the limit Gaussian distribution of the Pick-Freeze estimator of $S^{(1)}$. In this case where the null hypothesis is not verified ($S^{(1)} \approx 0.402$), the empirical cdf of the tests' p-values returns the estimated power of the test as a function of the nominal significance level $\alpha$.}
\label{fig:1}
\end{figure}

\begin{figure}[h!]
\centering
Probability of rejecting $H_0: S^{(1,3)} = S^{(1)}$
\includegraphics[width = \textwidth]{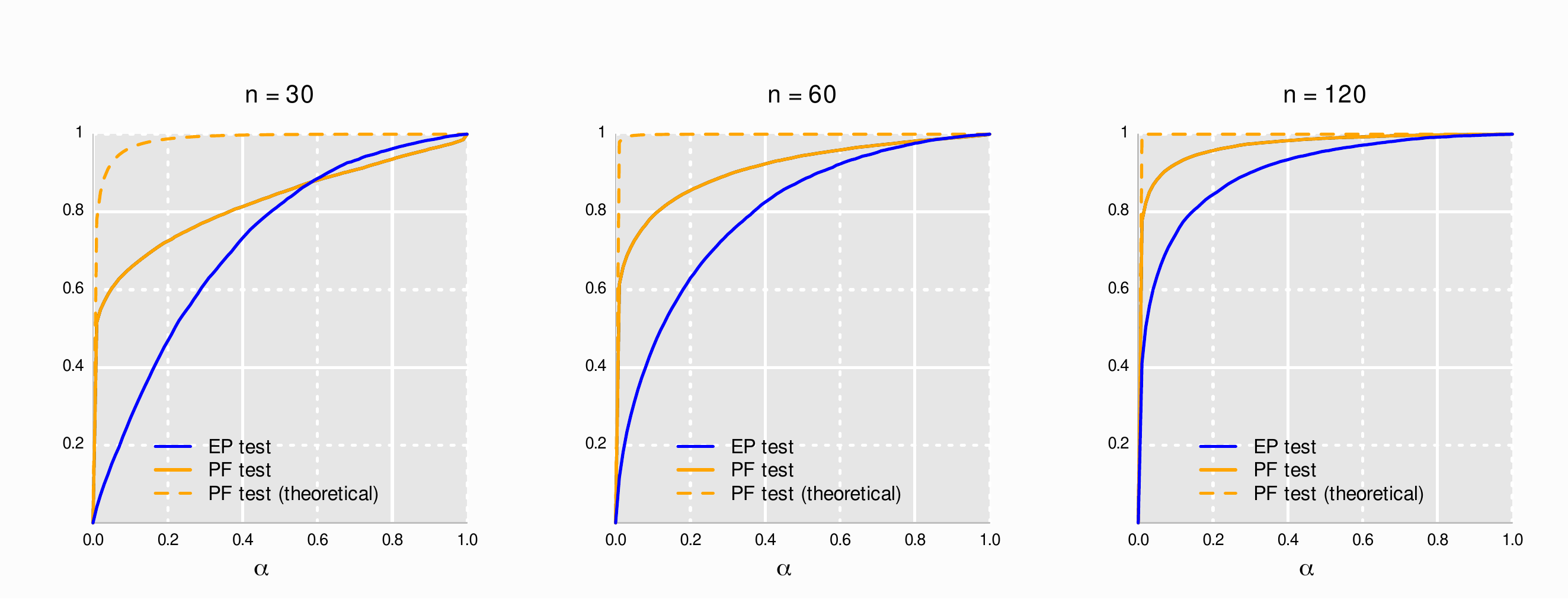}
\caption{\small Power of the EP and PF tests for the null hypothesis $H_0: S^{(1,3)} = S^{(1)}$ as a function of the significance level $\alpha$. The null hypothesis is false in this case where the true values of the Sobol indices actually differ from $S^{(1,3)} - S^{(1)} \approx 0.587$.}
\label{fig:13}
\end{figure}

Figures \ref{fig:1} and \ref{fig:13} display the estimated probability of rightfully rejecting $H_0: S^{(1)} = 0$ and $H_0: S^{(1,3)} = S^{(1)}$ respectively, as a function of the nominal significance level $\alpha$ for the EP and PF tests. The EP test seems to perform better overall for the simple hypothesis $H_0: S^{(1)} = 0$. The power rapidly converges towards $1$ for both tests, which conveys the high (non-parametric) influence of $X_1$ in this situation. The Gaussian approximation used to calibrate the PF test is satisfactory in this case as shown by the theoretical asymptotic power being close to its actual value. On the contrary, the PF test rightfully rejects the null hypothesis $H_0: S^{(1,3)} = S^{(1)}$ more often than the EP test. Despite the relatively high difference $S^{(1,3)} - S^{(1)} \approx 0.587$, the EP test is less powerful than for the previous simple hypothesis $H_0: S^{(1)} = 0$. Nevertheless, while the PF test is more powerful in this case, the convergence to the Gaussian limit appears to be slow as indicated by the high difference between the theoretical asymptotic power and its actual value. 

\begin{figure}[H]
\centering
Probability of rejecting $H_0$
\includegraphics[width = 0.85\textwidth]{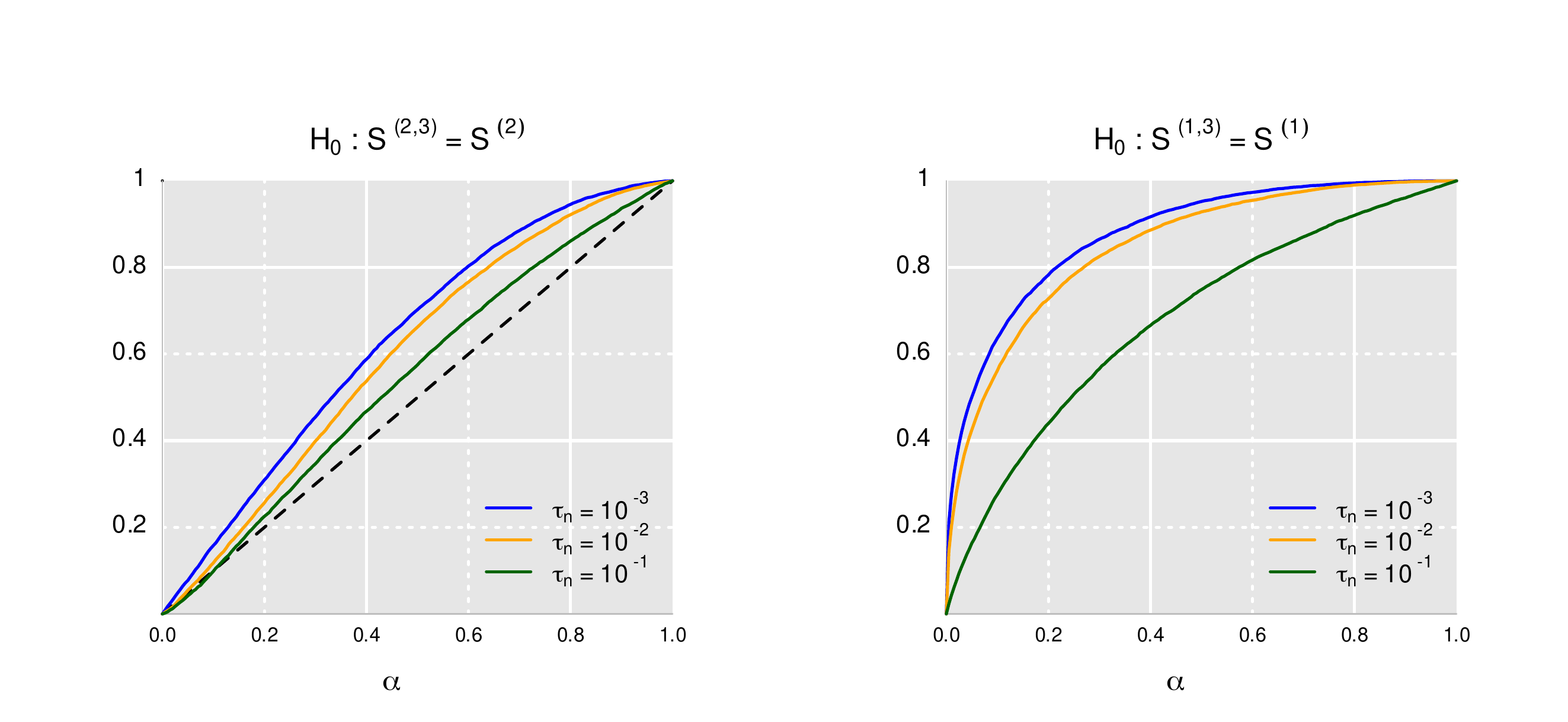}
\caption{\small Estimated significance level for $H_0: S^{(2,3)} = S^{(2)}$ (left) and power for $H_0: S^{(1,3)} = S^{(1)}$ (right) for a TSVD threshold obtained with $\tau_n = 10^{-3}, 10^{-2}$ and $10^{-1}$ (see Eq. \eqref{tau}). The sample size is $n = 60$ and design size $K=10$. }
\label{fig:t}
\end{figure}

Finally, the calibration of the regularization threshold used in the estimation of $ \Gamma(\mathbf x)$ has a non negligible impact on the quality of the test. In Figure \ref{fig:t}, we show the difference in both power and significance level for three different values of $\tau_n$. In this case, the rule of thumb gives the somewhat conservative $\tau_n = 0.1 n^{-1/3} \approx 0.026$, which ensures a reliable test in term of significance level. Remark that although both thresholds $\tau_n = 10^{-2}$ and $\tau_n = 10^{-1}$ lead to similar and somewhat accurate levels, we observe a significant improvement in term of power. This suggests that the EP test procedure has room for improvement, at least through optimizing the choice of the regularization threshold.

\subsection{Fuel consumption for aeronautical missions}

Fuel consumption in aeronautics has always been a key issue for the aeronautical and aerospace sectors as one of the main cost for airlines. Much efforts have been made in the past decades to reduce the airplanes fuel burnt, both at the aircraft design stage (by reducing mass, improving aerodynamics or optimizing engines) and during the operations (by searching in the best trajectories - both the ground track and the vertical profile - or by optimizing the quantity of fuel loaded to fly the distance and in the same time, meet the operational safety regulations). \\

The question of quantifying the impact of the operational variability and aircraft design on fuel consumption was raised in \cite{Paper:Peteilh2016, Peteilh:2017aa}. Operational variability can be measured from the disturbance of the moment chosen to climb, among the seven flight levels (FL) available in the cruise altitude ladder, yielding seven input variables $X_\text{fl1}, ..., X_\text{fl7}$. 
From a design point of view, a potential solution to make the airplane more robust to this variability appears to be a local modification of the airplane polar curve, whose distribution in the model depends on a position parameter $X_\text{cz}$ and a shape parameter $X_\text{lod}$. \\

The model used to create the experiment is based on the MARILib tools~\cite{Druot:2019aa, druot2022hydrogen}. 
In this study, a four-engine turbofan long range type of aircraft 
has been chosen and its design frozen except for the additional local aerodynamic parameters $X_\text{cz}$ and $X_\text{lod}$.
One reference mission is calculated with all input parameters set to zero which represent the neutral position and the optimized flight profile with the basic aerodynamics. The other flights calculated have a perturbed flight profile and locally improved aerodynamics. A total of one thousand flights have been calculated.\\

We discuss a step-by-step methodology to assess the importance of each input on the excess fuel consumption. Each step is aimed to describe one possible way to interpret and proceed based on the tests' results.\\

\noindent \textbf{Preliminary remarks. } 

\begin{itemize}
\item A first analysis, conducted on incorrectly generated data, concluded that none of the local aerodynamic inputs $X_\text{cz}, X_\text{lod}$ had any measurable impact on the excess fuel consumption, either directly or via interactions with the other inputs. This observation convinced the expert to revisit the source code where an error was found and corrected. The methodology applied to the correctly generated data is described below. 

\item Both approaches for the computation of the p-values, namely that of Eq.\eqref{eq:stat_test1} and the less time consuming Eq.\eqref{eq:stat_test2} were considered with similar conclusions in all cases. For sake of simplicity, we only present the results of the first p-value since it is our recommended approach when time and computational resources allow it. The quantiles of the generalized $\chi^2$ distribution where approximated by Monte-Carlo with using a sample of size $10000$. 

\item The difficult issue of calibrating the significance level to account for multiple testing has not been considered in this study. A rigorous way to achieve asymptotically exact multiple tests might be achievable from deriving a joint limit distribution of the empirical processes associated to different inputs sets $u$. While this question has not been discussed in this paper, it may be investigated in a future work. 
\end{itemize}

\noindent \textbf{One-dimensional analysis. } The behavior of the excess fuel consumption $Y$ with respect to each of the nine inputs is shown in Figure~ \ref{fig:plots2}.

\begin{figure}[h!]
\centering
\includegraphics[width = 1\textwidth]{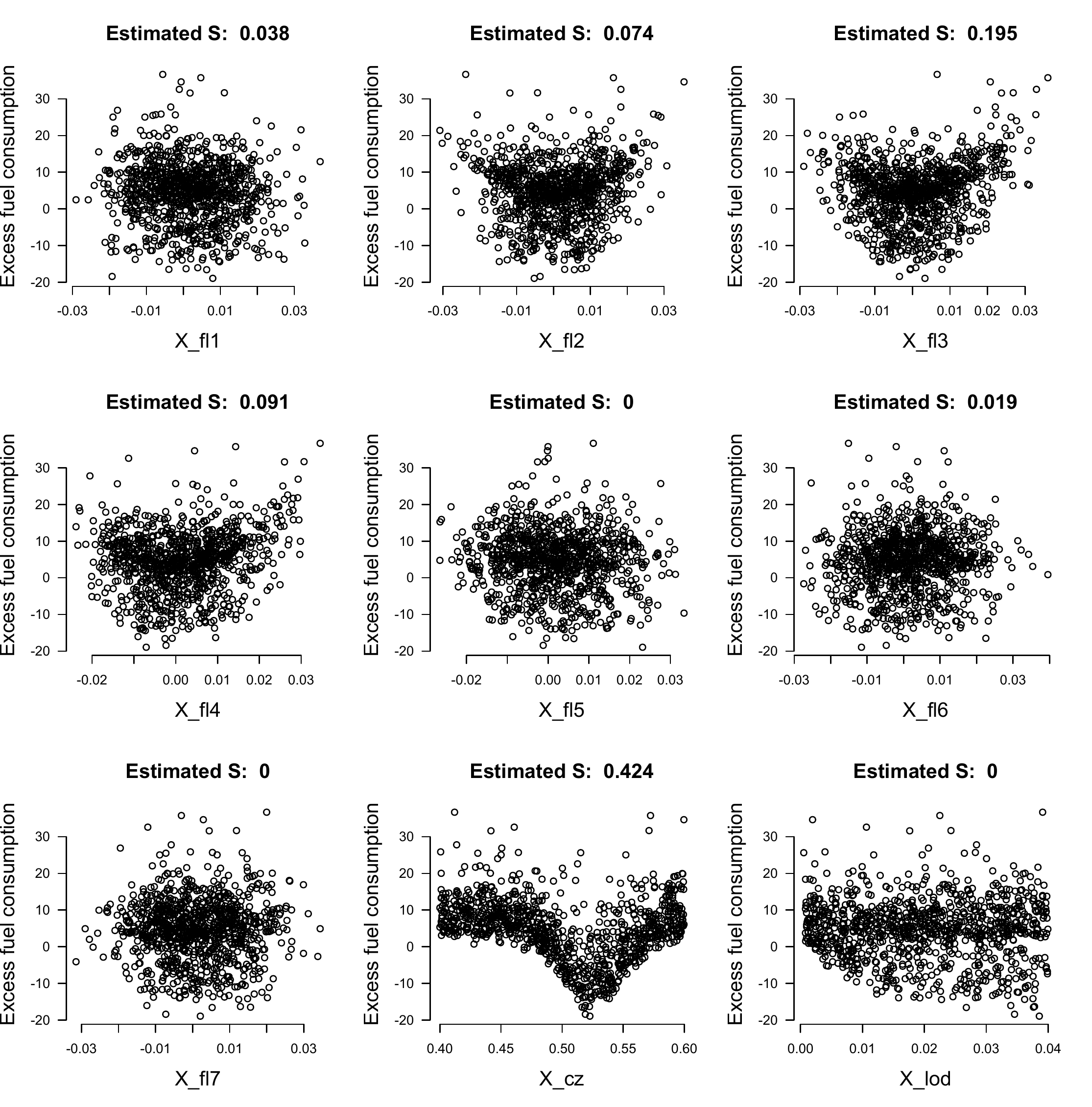}
\caption{\small Bi-variate representation of the excess fuel consumption with respect to each of the inputs $X_{\text{fl1}}, ..., X_{\text{fl7}}, X_{\text{cz}}, X_{\text{lod}}$, with the associated rank-based estimators of the simple Sobol indices from \cite{ggkl21}.}
\label{fig:plots2}
\end{figure}

\begin{table}[H]
\begin{center}
{\renewcommand{\arraystretch}{1.5}
\begin{tabular}{|c|ccccccccc|}
\hline 

 & $X_{\text{fl1}} $  & $X_{\text{fl2}} $  & $X_{\text{fl3}} $  & $X_{\text{fl4}} $ &  $X_{\text{fl5}}$ &  $X_{\text{fl6}}$ &  $X_{\text{fl7}}$ & $X_{\text{cz}}$ & $X_{\text{lod}}$ \\
 \hline 
 p-value & 0.009 & 0 & 0 & 0 & 0.029 & 0.285 & 0.525 & 0 & 0.105 \\
 \hline
\end{tabular}
}
\caption{\small p-values of the tests for the simple hypothesis $H_0: S^{X} = 0$ calculated for the nine inputs $X = X_{\text{fl1}}, ..., X_{\text{fl7}}, X_{\text{cz}}, X_{\text{lod}}$ individually. 
}
\end{center}
\end{table}

Here, the inputs $X_{\text{fl2}}, X_{\text{fl3}}, X_{\text{fl4}}$ and $X_{\text{cz}}$ are highly significant individually while $X_{\text{fl6}}, X_{\text{fl7}}$ and $X_{\text{lod}}$ do not seem to have an impact. The conclusions for $X_{\text{fl1}} $ and $X_{\text{fl5}}$ are more ambiguous. \\

\noindent \textbf{Model validation. } We choose to conserve only the four highly significant inputs and question the validity of the non-parametric model
$$ \mathbb E( Y | X_{\text{fl1}}, ..., X_{\text{fl7}}, X_{\text{cz}}, X_{\text{lod}})  = f(X_{\text{fl2}}, X_{\text{fl3}}, X_{\text{fl4}}, X_{\text{cz}}).  $$
In this context, the test can be used as a tool for non-parametric variable selection where the influence of each input, either added to or removed from the model, can be tested individually. The results show that the four included inputs are all highly significant, while the other inputs are summarized in the following table.

\begin{table}[H]
\begin{center}
{\renewcommand{\arraystretch}{1.5}
\begin{tabular}{|c|ccccc|}
\hline 

 & $X_{\text{fl1}} $  & $X_{\text{fl5}}$ &  $X_{\text{fl6}}$ &  $X_{\text{fl7}}$ &  $X_{\text{lod}}$ \\
 \hline 
 p-value  & 0.026 & 0.249 & 0.924 & 0.252 & 0.004 \\
 \hline
\end{tabular}
}
\caption{\small p-values of the EP test to assess the significance of the inputs in the non-parametric model with $X_{\text{fl2}}, X_{\text{fl3}}, X_{\text{fl4}}$ and $X_{\text{cz}}$. }
\end{center}
\end{table}

Based on these results, one can argue that the question of including the first output $X_{\text{fl1}} $ to the model remains open. More importantly, the last input $X_{\text{lod}}$ has become quite significant, a phenomenon that had not been observed in the previous steps of the analysis. Including this input to the non-parametric model yields to following results, rather stable compared to the previous model.

\begin{table}[H]
\begin{center}
{\renewcommand{\arraystretch}{1.5}
\begin{tabular}{|c|cccc|}
\hline 

 & $X_{\text{fl1}} $  & $X_{\text{fl5}}$ &  $X_{\text{fl6}}$ &  $X_{\text{fl7}}$  \\
 \hline 
 p-value & 0.025 & 0.099 & 0.924 & 0.514  \\
 \hline
\end{tabular}
}
\caption{\small p-values of the EP test to assess the significance of the (non-included) inputs in the non-parametric model with $X_{\text{fl2}}, X_{\text{fl3}}, X_{\text{fl4}}, X_{\text{cz}}$ and $X_{\text{lod}}$. }
\end{center}
\end{table}

\noindent \textbf{Global model significance. } Denoting by $S$ the global Sobol index of $Y$ with respect to the nine inputs, the validity of this final model can be assessed by testing the hypothesis
$$ H_0: S^{\text{ fl2, fl3, fl4, cz, lod}} = S . $$
We obtained p-values of approximately $0.276$ and $0.645$ for the two versions of the test, suggesting that the five inputs $X_{\text{fl2}}, X_{\text{fl3}}, X_{\text{fl4}}, X_{\text{cz}}, X_{\text{lod}}$ are in fact sufficient to explain the excess fuel consumption. \\

\noindent \textbf{Conclusion. } The test has been used on aeronautical data generated from a meta-model for planes fuel consumption. A step-by-step methodology for non-parametric model selection was successful in solving several problems:
\begin{itemize}
\item A previous analysis which concluded to the absence of measurable impact of aerodynamic inputs shed light on an error in the source computer code. This issue was solved and new data were generated from the corrected meta-model.
\item The individual impact of each input can be assessed to provide a preliminary idea of the relevant inputs.   
\item The significance of each input can be assessed in any given model, whether the input is included to the model or not. Then, the decision to include or remove an input can be made based on the results of the tests. Iterating the process leads to a step-wise variable selection process in a non-parametric setting. 
\item A global significance test can be performed to validate a final model. 
\end{itemize}

This example shows one possible approach to use the test procedure for non-parametric variable selection although the various steps of the process are left to the practitioner's interpretation. The method resulted in a selection of only five inputs among nine that were sufficient to explain the whole influence on the excess fuel consumption. In particular, the local aerodynamic parameters $X_{\text{cz}}, X_{\text{lod}}$) have been confirmed as potential relevant levers to get to a more operationally robust airplane. 

\bibliographystyle{plain}
\bibliography{refs}

\end{document}